\theoremstyle{plain} {
  \newtheorem{thm}{Theorem}[section]
  
  \newtheorem{cor}[thm]{Corollary}
  \newtheorem{lem}[thm]{Lemma}
  \newtheorem{prop}[thm]{Proposition}
  \theoremstyle{definition}
  \newtheorem{rem}[thm]{Remark}
    \newtheorem{constr}[thm]{Construction}
  \theoremstyle{plain}
  \newtheorem{clm}[thm]{Claim}

}
\renewcommand{\subsubsection}{\sssection\rm}
\newcommand{\bG}{\mathbf G}
\renewcommand{\P}{\mathbb P}
\DeclareMathOperator{\spec}{Spec}
\newcommand{\can}{\text{\rm can}}
\newcommand{\id}{\text{\rm id}}
\newcommand{\pr}{\text{\rm pr}}
\newcommand{\inc}{\text{\rm inc}}
\newcommand{\const}{\text{\rm const}}
\newcommand{\Spec}{\text{\rm Spec}}
\newcommand{\Aff}{\mathbf {A}}
\newcommand{\Pro}{\mathbf {P}}
\newcommand \xra {\xrightarrow }
\newcommand \hra {\hookrightarrow }
\newcommand{\ttf}{{\text{f}}}
\renewcommand{\P}{\mathbb P}
\newcommand\mydim{\text{\rm dim}}
\renewcommand \id{\operatorname{id}}
\renewcommand \phi\varphi
\newcommand{\et}{\text{\rm\'et}}
\newcommand{\ZZ}{\mathbb Z}
\begin{document}

\title{A theory of nice triples and a theorem due to O.Gabber
}

\author{Ivan Panin\footnote{The author acknowledges support of the
RNF-grant 14-11-00456.}
}


\maketitle

\begin{abstract}
In a series of papers \cite{Pan0}, \cite{Pan1}, \cite{Pan2}, \cite{Pan3} we give a detailed and better structured
proof of the Grothendieck--Serre's conjecture
for semi-local regular rings containing a finite field. The outline of the proof
is the same as in \cite{P1},\cite{P2},\cite{P3}.
If the semi-local regular ring contains an infinite field,
then the conjecture is proved in \cite{FP}. {\it Thus the conjecture
is true for regular local rings containing a field.
}

The present paper is the one \cite{Pan0} in that series.
Theorem \ref{MainHomotopyIntrod} is one of the main result of the paper.
The proof of the latter theorem is completely geometric.
It is based on a theory of nice triples from \cite{PSV}
and on its extension from \cite{P}.
The theory of nice triples is inspired by
the Voevodsky theory of standart triples \cite{V}.

Theorem \ref{MainHomotopyIntrod} yields
{\it an unpublished result} due to O.Gabber
(see the theorem \ref{PSV}=the theorem \ref{Gabbers_Theorem}).

\end{abstract}

\section{Main results}\label{Introduction}
Let $R$ be a commutative unital ring. Recall that an $R$-group scheme $\bG$ is called \emph{reductive},
if it is affine and smooth as an $R$-scheme and if, moreover,
for each algebraically closed field $\Omega$ and for each ring homomorphism $R\to\Omega$ the scalar extension $\bG_\Omega$ is
a connected reductive algebraic group over $\Omega$. This definition of a reductive $R$-group scheme
coincides with~\cite[Exp.~XIX, Definition~2.7]{SGA3}.
A~well-known conjecture due to J.-P.~Serre and A.~Grothendieck
(see~\cite[Remarque, p.31]{Se}, \cite[Remarque 3, p.26-27]{Gr1}, and~\cite[Remarque~1.11.a]{Gr2})
asserts that given a regular local ring $R$ and its field of fractions~$K$ and given a reductive group scheme $\bG$ over $R$, the map
\[
  H^1_{\text{\'et}}(R,\bG)\to H^1_{\text{\'et}}(K,\bG),
\]
induced by the inclusion of $R$ into $K$, {\it has a trivial kernel.}
If $R$ contains an infinite field, then the conjecture is proved in [FP].

For a scheme $U$ we denote by $\mathbb A^1_U$ the affine line over $U$ and by $\P^1_U$ the projective line over $U$.
Let $T$ be a $U$-scheme. By a principal $\bG$-bundle over $T$ we understand a principal $\bG\times_UT$-bundle.
We refer to
\cite[Exp.~XXIV, Sect.~5.3]{SGA3}
for the definitions of
a simple simply-connected group scheme over a scheme
and a semi-simple simply-connected group scheme over a scheme.



\begin{thm}\label{PSV}
Let $k$ be a finite field. Let $\mathcal O$ be the semi-local ring of finitely many closed points on a $k$-smooth irreducible affine $k$-variety $X$
and let $K$ be its field of fractions. Let $\bG$ be a simply-connected reductive group scheme over $k$.
Then
the map
$$\text{H}^1_{et}(\mathcal O,G) \to \text{H}^1_{et}(K,G),$$
induced by the inclusion $\mathcal O$ into $K$, has trivial kernel.
\end{thm}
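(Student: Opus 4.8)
The theorem asserts the Grothendieck--Serre conjecture for $\mathcal{O}$ the semi-local ring of finitely many closed points on a smooth affine variety over a finite field $k$, with $\mathbf{G}$ simply-connected reductive over $k$ (hence defined over the base field, not merely over $\mathcal{O}$). The natural route is to \emph{deduce this from Theorem~\ref{MainHomotopyIntrod}}, the main geometric result of the paper, exactly as advertised in the introduction; so the proof should be short modulo that theorem and standard reductions. I would proceed as follows.

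\smallskip

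\noindent\textbf{Step 1: Reduce to a purely geometric statement.} Let $\xi \in H^1_{\text{\'et}}(\mathcal{O}, \mathbf{G})$ be a class dying over $K$. Since $\mathbf{G}$ is finitely presented and $\mathcal{O} = \varinjlim \mathcal{O}[1/f]$ over principal affine opens $X_f \subseteq X$ containing all the chosen closed points, a standard limit argument spreads $\xi$ out to a $\mathbf{G}$-torsor $\mathcal{P}$ over some affine open $X' = X_f$ whose restriction to $\mathcal{O}$ is $\xi$, and the triviality over $K$ spreads to triviality over some dense open $X'' \subseteq X'$. Thus we are reduced to: given a $\mathbf{G}$-torsor $\mathcal{P}$ on a smooth affine $X''$ over $k$, trivial on a dense open $U \subseteq X''$ whose complement avoids the finitely many closed points, show $\mathcal{P}|_{\mathcal{O}}$ is trivial. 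After possibly shrinking (and using that a finite set of closed points on an affine $k$-variety lies in an affine open), we may assume the finite set of points lies in $X''$ and $\mathbf{G}$ is (semi-)simple simply-connected, or a product of Weil restrictions of such along finite \'etale extensions, which is the hypothesis under which the homotopy theorem is stated.

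\smallskip

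\noindent\textbf{Step 2: Apply the nice-triples machinery / Theorem~\ref{MainHomotopyIntrod}.} This is the heart. One constructs, using the theory of nice triples (from \cite{PSV}, extended in \cite{P}, in the spirit of Voevodsky's standard triples \cite{V}), an elementary \emph{Nisnevich neighborhood}: a diagram relating $(X'', \mathcal{P})$ near the semi-local point to an affine line $\mathbb{A}^1_S$ over a smooth affine $k$-scheme $S$ of dimension $\dim X'' - 1$, in such a way that (i) the torsor extends to $\mathbb{A}^1_S$, (ii) it is trivial along the section at infinity $\{\infty\} \times S$ (or along $\{0\}\times S$ and on a principal open disjoint from the relevant closed fiber), and (iii) the semi-local point is recovered by restricting to a section $\mathbb{A}^1_S \dashrightarrow$ base. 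Theorem~\ref{MainHomotopyIntrod} then provides the $\mathbb{A}^1$-invariance input: a torsor on $\mathbb{A}^1_S$ that is trivial along one rational section is, after restricting to the semi-localization, isomorphic to the pullback of its restriction along that section — hence trivial, since the section we control lands in the locus where $\mathcal{P}$ is already trivial. Feeding this back through the Nisnevich square (using that $H^1_{\text{\'et}}(-,\mathbf{G})$ satisfies Nisnevich excision for torsors under a smooth affine group scheme) shows $\mathcal{P}$ is trivial over $\mathcal{O}$.

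\smallskip

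\noindent\textbf{The main obstacle.} The genuinely hard part is Step~2 — specifically, producing the nice triple: one must find the smooth affine base $S$, the finite morphism to $\mathbb{A}^1_S$, and control the ``bad locus'' (the support of the nontriviality of $\mathcal{P}$) so that it is \emph{finite over $S$} and misses the section through the semi-local point, while simultaneously arranging a section at which the torsor trivializes. Over a finite field this is delicate because the usual geometric presentation lemmas (Gabber's presentation, Quillen's trick) require enough rational points or a sufficiently large ground field for a generic-projection/Bertini argument to go through; circumventing this is precisely why one needs the \emph{extension} of the nice-triples theory from \cite{P} rather than \cite{PSV} alone, and why the finite-field case is stated separately from the infinite-field case handled in \cite{FP}. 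Once the nice triple is in hand, the remaining deduction from Theorem~\ref{MainHomotopyIntrod} is formal. I would therefore organize the write-up as: (a) the limit reduction (Step~1, brief); (b) construction of the nice triple adapted to $(\mathcal{O}, \mathbf{G}, \mathcal{P})$, citing \cite{PSV,P}; (c) invocation of Theorem~\ref{MainHomotopyIntrod} and Nisnevich excision to conclude.
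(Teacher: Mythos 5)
Your Step 2 misreads what Theorem \ref{MainHomotopyIntrod} provides, and as a result it omits the one ingredient that actually finishes the proof. Theorem \ref{MainHomotopyIntrod} is purely a construction statement: it produces a principal $\bG$-bundle $\mathcal G_t$ on $\mathbb A^1_U$ (with $U=\Spec\mathcal O$ itself, not an affine line over a lower-dimensional base $S$) and a monic $h\in\mathcal O[t]$ such that $\mathcal G_t$ is trivial on $(\mathbb A^1_U)_h$, $h(1)$ is a unit, and $\mathcal G_t|_{\{0\}\times U}=\mathcal G$. It asserts no ``$\mathbb A^1$-invariance'' and no triviality of $\mathcal G_t$; nor can triviality be extracted formally from Nisnevich excision, since $H^1_{\text{\'et}}(-,\bG)$ is not $\mathbb A^1$-invariant and the corresponding statement over the affine line is in fact false for anisotropic groups. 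The missing step, which is how the paper concludes, is the Horrocks/Gille-type theorem \cite[Thm.~1.3]{PSV}: for an \emph{isotropic} simple simply-connected group scheme, a torsor over $B[t]$ that is trivial over $B[t]_h$ with $h$ monic and $h(1)\in B^{\times}$ is itself trivial. Applied to $(\mathcal G_t,h)$ this gives that $\mathcal G_t$ is trivial, hence so is $\mathcal G=\mathcal G_t|_{\{0\}\times U}$.

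The isotropy hypothesis is precisely where the assumption that $\bG$ is defined over the \emph{finite} field $k$ enters, and your proposal never uses it: after reducing, as you and the paper both do, to $\bG$ simple simply-connected, one invokes Lang's theorem to see that $\bG$ admits a Borel subgroup over $k$, hence is quasi-split and in particular isotropic, so that \cite[Thm.~1.3]{PSV} applies. Without this arithmetic input the chain ``Theorem \ref{MainHomotopyIntrod} $\Rightarrow$ $\mathcal G_t$ trivial $\Rightarrow$ $\mathcal G$ trivial'' breaks down, and your claim that triviality along one section plus triviality off $\{h=0\}$ already forces triviality over the semi-localization is not valid. A secondary remark: your Step 2 also re-imports the nice-triples and Nisnevich-square machinery, but that is internal to the proofs of Theorems \ref{MainHomotopyIntrod} and \ref{MajorIntrod}; for the statement at hand one may take Theorem \ref{MainHomotopyIntrod} as a black box, and the remaining argument consists exactly of the quasi-splitness observation and the application of \cite[Thm.~1.3]{PSV} described above.
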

The latter theorem is
{\it an unpublished theorem} due to O.Gabber.

\begin{thm}
\label{MainHomotopyIntrod}
Let $k$ be a field. Let $\mathcal
O$ be the semi-local ring of finitely many {\bf closed points} on a
$k$-smooth irreducible affine $k$-variety $X$.
Set $U=\spec \mathcal O$.
Let $\bG$ be a reductive
group scheme over $k$.
Let $\mathcal G$ be a principal $\bG$-bundle over $U$
trivial over the generic point of $U$.
Then there
exists a principal $\bG$-bundle $\mathcal G_t$ over the affine line $\mathbb A^1_U=\spec {\mathcal O}[t]$
and a monic polynomial $h(t) \in \mathcal O[t]$ such
that
\par
(i) the $\bG$-bundle $\mathcal G_t$ is trivial over the open subscheme $(\mathbb A^1_U)_h$ in $\mathbb A^1_U$ given by $h(t)\ne0$;
\par
(ii) the restriction of $\mathcal G_t$ to $\{0\}\times U$ coincides
with the original $\bG$-bundle $\mathcal G$.
\par
(iii) $h(1) \in \mathcal O$ is a unit.
\end{thm}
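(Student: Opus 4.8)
The plan is to reduce the statement to a geometric input about principal $\bG$-bundles on curves over $U$, obtained from the theory of nice triples of \cite{PSV} together with its extension in \cite{P}. First I would fix a trivialization of $\mathcal G$ over the generic point of $U$; by a standard spreading-out argument this extends to a trivialization over some open $V\subset U$ whose complement $Z$ is closed of positive codimension and supported away from nowhere. The key move is then to construct, via a nice triple $(q_U\colon \mathcal X\to U,\, f,\, \Delta)$ associated to the pair $(U,Z)$ — an essentially smooth affine morphism of relative dimension one equipped with a section $\Delta$ and a function $f$ whose vanishing locus is finite over $U$ and contains $\Delta(Z)$ — a principal $\bG$-bundle $\mathcal G_{\mathcal X}$ on $\mathcal X$ that agrees with $q_U^*\mathcal G$ near $\Delta(U)$ and is trivial away from the closed subscheme cut out by $f$. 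This is exactly the place where the technology of nice triples, descent for the pair $(\mathcal X, \mathcal G_{\mathcal X})$, and Horrocks-type or quasi-splitting arguments for $\bG$-bundles enter.

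Next I would perform the ``elementary fibration plus projection'' step: after possibly shrinking and using that $\bG$ is defined over the base field $k$ (so that its bundles glue well), one arranges $\mathcal X$ to be an open subscheme of $\P^1_U$, or more precisely obtains a finite surjective $U$-morphism $\sigma\colon \mathcal X\to \mathbb A^1_U$ that is \'etale near the relevant loci and carries $\Delta(U)$ to $\{0\}\times U$ and the bad locus into $\{h'=0\}$ for a suitable monic $h'\in\mathcal O[t]$. Pushing forward (or rather using the patching/Weil-restriction-type formalism for torsors in this situation) produces the desired $\bG$-bundle $\mathcal G_t$ on $\mathbb A^1_U=\Spec\mathcal O[t]$: property (ii) comes from the compatibility $\Delta(U)\mapsto\{0\}\times U$ combined with the agreement of $\mathcal G_{\mathcal X}$ with $q_U^*\mathcal G$ there, and property (i) comes from triviality of $\mathcal G_{\mathcal X}$ off $f=0$ together with control of $\sigma$ over $\{h'\ne 0\}$, yielding the monic $h$ with $(\mathbb A^1_U)_h$ inside the trivializing locus.

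Finally, for (iii): by construction the bad locus of $\mathcal G_t$ on $\mathbb A^1_U$ is finite over $U$ and disjoint from $\{0\}\times U$; since we have freedom in choosing the coordinate $t$ (equivalently, in choosing the section of the elementary fibration and the auxiliary function), one can normalize so that the fibre over $\{1\}\times U$ also misses the bad locus, i.e. $h(1)\in\mathcal O^\times$. Concretely this is arranged by a translation/rescaling of $t$ after checking that the finitely many ``bad'' points over the semilocal $U$ avoid a chosen unit value — possible because $\mathcal O$ is semilocal with infinite residue field extensions available, or by passing to a suitable polynomial change of variable that fixes $0$ and moves $1$ away from the bad fibres.

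\medskip

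I expect the main obstacle to be the construction of $\mathcal G_{\mathcal X}$ on the total space of the nice triple with the precise boundary behaviour demanded — namely simultaneously trivial outside $\{f=0\}$ and equal to the pulled-back bundle in a neighbourhood of $\Delta(U)$ — since this requires patching two torsors along an \'etale neighbourhood and invoking the structural results on nice triples from \cite{PSV} and \cite{P} in a form strong enough to keep track of the section $\Delta$, the function $f$, and the eventual finiteness over $\mathbb A^1_U$. The passage from $\mathcal X$ back to $\mathbb A^1_U$ and the normalization step (iii) are comparatively soft once the curve-level bundle is in hand.
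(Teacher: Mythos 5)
Your overall plan (spread $\mathcal G$ out to a bundle $\mathcal G'$ on $X$ trivial on $X_{\mathrm f}$, build a relative curve over $U$ via a nice triple, get a finite surjective morphism to $\Aff^1\times U$ which is \'etale near the relevant loci, and patch the pulled-back bundle with the trivial bundle over $(\Aff^1\times U)_h$) is indeed the strategy the paper uses, except that the paper packages all of the curve-level geometry into the quoted geometric Theorem \ref{MajorIntrod} (= \cite[Thm.~1.2]{P}) and then just glues $p_X^*(\mathcal G')$ with the trivial bundle along $Y_h$ in the elementary distinguished square; in particular, since $\bG$ is defined over $k$, no matching of the curve bundle with $q_U^*\mathcal G$ near $\Delta(U)$ is needed --- property (ii) falls out of $p_X\circ\delta=can$ and $\tau\circ\delta=i_0$.

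The genuine gap is your treatment of (iii). You propose to secure $h(1)\in\mathcal O^\times$ \emph{after} the construction, by a translation, rescaling, or ``suitable polynomial change of variable that fixes $0$''. This fails precisely in the case this paper is written for, namely $k$ (and hence the residue fields of $\mathcal O$) finite. Any automorphism of $\Aff^1_U$ fixing the zero section is of the form $t\mapsto \lambda t$ with $\lambda\in\mathcal O^\times$ (a non-linear polynomial substitution is not invertible), so arranging (iii) a posteriori amounts to finding a unit $\lambda$ with $h(\lambda)\in\mathcal O^\times$. If some closed point $u$ of $U$ has residue field $\mathbb F_q$ and the fibre of $\{h=0\}$ over $u$ contains all of $\mathbb F_q^\times$ (e.g. $h\equiv t\prod_{\lambda\in\mathbb F_q^\times}(t-\lambda)\cdot(\cdots)$ modulo $\mathfrak m_u$), no such $\lambda$ exists; your appeal to ``infinite residue field extensions available'' does not help, because one needs a unit of $\mathcal O$ itself, i.e. a rational point of each closed fibre of $\Aff^1_U$ avoiding $\{h=0\}$. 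In the paper the condition $h(1)\in\mathcal O^\times$ is not a normalization but an output of the geometric theorem (its condition (iv)), and over a finite field it is one of the delicate points: it is built into the construction of the finite surjective morphism $\sigma$ (the condition $\tau^{-1}(1)\cap B'=\emptyset$ of the ``toy task'', item (e) of the theorem producing $\sigma$), and securing it requires the finite-field point-counting and Poonen--Bertini arguments underlying \cite{P}. So your steps (i)--(ii) are essentially the paper's route, but (iii) cannot be dismissed as soft; it must be carried along inside the construction of $\sigma$ and $h$ from the start.
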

If the field $k$ is infinite a stronger result is proved in
\cite[Thm.1.2]{PSV}.
Theorem \ref{MainHomotopyIntrod} is easily derived
from
Theorem \ref{MajorIntrod} (= \cite[Thm. 1.2]{P}).

\begin{thm}[Geometric]\label{MajorIntrod}
Let $X$ be an affine $k$-smooth irreducible $k$-variety, and let $x_1,x_2,\dots,x_n$ be closed points in $X$.
Let $U=Spec(\mathcal O_{X,\{x_1,x_2,\dots,x_n\}})$ and $\textrm{f}\in k[X]$ be
a non-zero function vanishing at each point $x_i$. Then
there is a monic polinomial $h\in O_{X,\{x_1,x_2,\dots,x_n\}}[t]$,
a commutative diagram
of schemes with the irreducible affine $U$-smooth $Y$
\begin{equation}
\label{SquareDiagram2_2}
    \xymatrix{
       (\Aff^1 \times U)_{h}  \ar[d]_{inc} && Y_h:=Y_{\tau^*(h)} \ar[ll]_{\tau_{h}}  \ar[d]^{inc} \ar[rr]^{(p_X)|_{Y_h}} && X_f  \ar[d]_{inc}   &\\
     (\Aff^1 \times U)  && Y  \ar[ll]_{\tau} \ar[rr]^{p_X} && X                                     &\\
    }
\end{equation}
and a morphism $\delta: U \to Y$ subjecting to the following conditions:
\begin{itemize}
\item[\rm{(i)}]
the left hand side square
is an elementary {\bf distinguished} square in the category of affine $U$-smooth schemes in the sense of
\cite[Defn.3.1.3]{MV};
\item[\rm{(ii)}]
$p_X\circ \delta=can: U \to X$, where $can$ is the canonical morphism;
\item[\rm{(iii)}]
$\tau\circ \delta=i_0: U\to \Aff^1 \times U$ is the zero section
of the projection $pr_U: \Aff^1 \times U \to U$;
\item[\rm{(iv)}] $h(1)\in \mathcal O[t]$ is a unit.
\end{itemize}
\end{thm}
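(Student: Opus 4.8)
The plan is to recognise the statement as the geometric heart of the theory of nice triples and to reduce it to a geometric presentation lemma of Quillen--Gabber type relative to the semilocal base $U$. When $k$ is infinite the statement is precisely \cite[Thm.~1.2]{PSV}, so the real content is the case of a finite field, where generic linear projections are no longer available and one must invoke the extension of the nice-triples formalism of \cite{P}. As a preliminary I would pass to an affine open neighbourhood of $\{x_1,\dots,x_n\}$ inside $X$ (still containing all of them), fix a closed embedding of it into some $\mathbb{A}^N_k$, and write $d=\dim X$.

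The key input I would invoke is then a presentation: a (possibly smaller) affine open $X^1\ni x_1,\dots,x_n$, a smooth morphism $\pi\colon X^1\to S:=\mathbb{A}^{d-1}_k$ of relative dimension $1$, and a function $t\in k[X^1]$, such that $V(f)\cap X^1$ is finite over $S$ and $(\pi,t)\colon X^1\to\mathbb{A}^1_S$ exhibits $X^1$ as an \'etale neighbourhood of $V(f)\cap X^1$ in $\mathbb{A}^1_S$; concretely $(\pi,t)$ is \'etale, restricts to a closed immersion on $V(f)\cap X^1$, and $(\pi,t)^{-1}\bigl((\pi,t)(V(f)\cap X^1)\bigr)=V(f)\cap X^1$. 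Over an infinite field $\pi$ is produced as a generic linear projection from the chosen affine embedding --- this is the classical trick of Quillen and Gabber and is what \cite{PSV} uses. Over a finite field there need not be a linear projection with these properties, and the substitute --- a non-linear projection coming from a sufficiently high-degree re-embedding, controlled by a Bertini-type statement over finite fields --- is exactly the technical extension worked out in \cite{P}.

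Granting such a presentation, I would build the diagram as follows. Since all $x_i$ lie in $X^1$, the canonical morphism $U\to X$ factors through $X^1$; composing with $\pi$ gives $U\to S$, and I set $Y:=X^1\times_S U$, an affine $U$-smooth scheme of relative dimension $1$ (which we may assume irreducible after replacing it by the component containing the image of $\delta$). Let $p_X\colon Y=X^1\times_S U\to X^1\hookrightarrow X$ be the first projection, $\delta:=(\mathrm{can},\mathrm{id}_U)\colon U\to Y$, and $\tau\colon Y=X^1\times_S U\to(\mathbb{A}^1\times S)\times_S U=\mathbb{A}^1\times U$ the base change of $(\pi,t)$, further modified by subtracting $\delta^{*}\tau\in\mathcal{O}$ from the $\mathbb{A}^1$-coordinate so that $\tau\circ\delta=i_0$; then $p_X\circ\delta=\mathrm{can}$ and $\tau\circ\delta=i_0$, i.e.\ (ii) and (iii). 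The closed subscheme $\mathcal{Z}:=p_X^{-1}(V(f))\subset Y$ is finite over $U$ by the choice of $\pi$; $\tau|_{\mathcal{Z}}$ is a closed immersion with $\tau^{-1}(\tau(\mathcal{Z}))=\mathcal{Z}$, so $\tau(\mathcal{Z})$ is a closed subscheme of $\mathbb{A}^1_U$ finite over $U$, and since $\mathcal{O}$ is semilocal it equals $V(h)$ for a monic $h\in\mathcal{O}[t]$. For this $h$ one has $Y_{\tau^{*}h}=Y\setminus\mathcal{Z}$, whose image under $p_X$ lies in $X\setminus V(f)=X_f$, so the right square commutes; and the left square is cartesian with open-immersion vertical arrows, $\tau$ \'etale, and $\tau^{-1}(V(h))_{\mathrm{red}}=\mathcal{Z}_{\mathrm{red}}\xrightarrow{\ \sim\ }V(h)_{\mathrm{red}}$, which is exactly the notion of an elementary distinguished square of \cite[Defn.~3.1.3]{MV}, giving (i). Finally (iv) amounts to making $\tau(\mathcal{Z})$ disjoint over $U$ from the section $\{t=1\}$; over large residue fields this is a harmless choice of the coordinate $t$, but over small finite residue fields it must be built into the presentation of the second paragraph.

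I expect the main obstacle to be exactly the finite-field presentation lemma. Over an infinite field, finding a linear projection that is simultaneously smooth of relative dimension $1$ along all of the $x_i$ and finite on $V(f)$ is a short genericity argument; over a finite field, $\mathbb{A}^N_k$ may not contain enough $k$-rational linear subspaces for this, and producing a suitable non-linear replacement while keeping control of the \'etale-neighbourhood structure near $V(f)$, of the irreducibility of $Y$, and of the normalisation $h(1)\in\mathcal{O}^{\times}$ is the delicate part; this is precisely the content of the extension of the theory of nice triples in \cite{P} on which the present derivation rests. Everything else is formal, using repeatedly that, because $\mathcal{O}$ is semilocal, a closed subscheme of $\mathbb{A}^1_U$ finite over $U$ is cut out by a single monic polynomial.
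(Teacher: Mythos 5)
You should first note that the paper itself does not prove Theorem \ref{MajorIntrod}: it is imported verbatim as \cite[Thm.~1.2]{P}, and the argument in \cite{P} (the nice-triples machinery, parts of which are reproduced in this very file) is organised quite differently from your sketch. There one first shrinks $X$ to an elementary fibration $p\colon X\to S$ with $\{\mathrm{f}=0\}$ finite over $S$ and a finite surjective morphism $X\to \mathbb{A}^1\times S$ --- at this stage nothing is \'etale and nothing is a closed immersion --- then base-changes along $U\to S$ to get a nice triple $(q_U\colon\mathcal X\to U,f,\Delta)$, and only then does the finite-field work, \emph{over} $U$: a finite \'etale modification $\mathcal X'\to\mathcal X$ is built from Poonen's Bertini theorem \cite{Poo1} together with a count of closed points of each degree in the finitely many closed fibres, so that $\{f'=0\}$ has at most one rational point and few points of each degree in every closed fibre; then a \emph{new} finite surjective $U$-morphism $\sigma\colon\mathcal X'\to\mathbb{A}^1\times U$ is constructed which is a closed immersion on $\{f'=0\}$, \'etale near $\{f'=0\}\cup\Delta'(U)$, satisfies $\sigma^{-1}(\sigma(\{f'=0\}))=\{f'=0\}\sqcup\mathcal Z''$ with $\mathcal Z''\cap\Delta'(U)=\emptyset$, avoids the sections at $0$ and $1$ (this is where (iv) comes from), and comes with a monic $h$ such that $(h)=\Ker(\mathcal O[t]\to\Gamma(\mathcal X',\mathcal O_{\mathcal X'})/(f'))$; the distinguished square appears only at the end, by inverting a function $g$ killing $\mathcal Z''$ and the non-\'etale locus. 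Your proposal instead postulates a one-shot presentation over $S=\mathbb{A}^{d-1}_k$ (globally \'etale $(\pi,t)$, closed immersion on $V(\mathrm f)$, full preimage property, $V(\mathrm f)$ finite over $S$) and reduces everything to a base change. You correctly identify this presentation as the crux, but over a finite field it is not supplied by \cite{P}: the relevant theorem of \cite{P} \emph{is} the statement being proved, so the step you defer is circular; and even over an infinite field genericity gives \'etaleness only near the bad locus, after which shrinking to the \'etale locus can destroy the finiteness of $V(\mathrm f)$ over $S$ --- exactly the difficulty the actual proof sidesteps by keeping a finite surjective $\sigma$ over $U$ and localizing only at the last step (via \cite[Lemma~8.2]{OP1}-type statements).

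In the parts you do fill in there are concrete gaps. The claim that, $\mathcal O$ being semilocal, every closed subscheme of $\mathbb{A}^1_U$ finite over $U$ equals $V(h)$ for a monic $h$ is false: for $\mathcal O$ local and a nonunit $a\neq 0$ the ideal $(t^2,at)\subset\mathcal O[t]$ defines such a subscheme but is not principal. One can always produce a monic $h$ \emph{vanishing} on $\tau(\mathcal Z)$ (a characteristic-polynomial argument), but then $V(h)$ may be strictly larger than $\tau(\mathcal Z)$, and the condition of \cite[Defn.~3.1.3]{MV} that $\tau^{-1}(V(h))_{\mathrm{red}}\to V(h)_{\mathrm{red}}$ be an isomorphism is lost; in \cite{P} the equality of $(h)$ with the full ideal of the image of $\{f'=0\}$ is engineered as part of the construction of $\sigma$, not deduced from semilocality. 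Replacing $Y=X^1\times_S U$ by an irreducible component is also not harmless: it can destroy $\tau^{-1}(\tau(\mathcal Z))=\mathcal Z$, whereas in the actual proof irreducibility is automatic because the elementary fibration has geometrically irreducible fibres and $U$ is irreducible. Finally, condition (iv) is not a matter of choosing the coordinate $t$: over small residue fields it is secured inside the counting construction of $\sigma$ (keeping $\{f'=0\}$ away from both the zero and the unit sections), that is, precisely in the part of the proof your sketch leaves to \cite{P}.
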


The author thanks A.~Suslin for his interest in the topic of the present article. He
also thanks to A.Stavrova for paying his attention to Poonen's works on Bertini type theorems
for varieties over finite fields. He thanks D.Orlov for useful comments concerning
the weighted projective spaces tacitely involved in the construction of elementary fibrations.
He thanks M.Ojanguren for many inspiring ideas arising from our joint works with him.

\section{Proof of Theorem \ref{MainHomotopyIntrod}}\label{MHIntrod}

\begin{proof}[Proof of Theorem \ref{MainHomotopyIntrod}]
The $U$-group scheme $\bG$ is defined over the base field $k$. We may and will suppose that the principal $\bG$-bundle
$\mathcal G$ is the restriction to $U$ of a principal $\bG$-bundle $\mathcal G'$ on $X$,
and the restriction of $\mathcal G'$ to an principal open subset $X_{\text{f}}$ is trivial.
If $U=\text{Spec}(\mathcal O_{X,x_1,...,x_n})$, then we may and will suppose that
$\text{f}$ vanishes at each point $x_i$.

Theorem \ref{MajorIntrod} (= \cite[Thm. 1.2]{P}) states that there are a monic polinomial
$h\in \mathcal O_{X,x_1,...,x_n}[t]$,
a commutative diagram
(\ref{SquareDiagram2_2})
of schemes with the irreducible affine $U$-smooth $Y$,
and a morphism $\delta: U \to Y$
subjecting to conditions
(i) to (iv) from
Theorem \ref{MajorIntrod}.

Now take the monic polinomial
$h\in \mathcal O_{X,x_1,...,x_n}[t]$
as the desired polinomial
and construct
the desired  principal $\bG$-bundle on $\Aff^1\times U$
as follows.

Take the pull-back $p^*_X(\mathcal G')$ of $\mathcal G'$ to $Y$.
The restriction of $p^*_X(\mathcal G')$ to $Y_h$ is trivial, since
the restriction of $\mathcal G'$ to $X_{\text{f}}$ is trivial.
Take now the trivial $\bG$-bundle over the principal open subset $(\Aff^1\times U)_h$
and glue it with $p^*_X(\mathcal G')$ via an isomorphism over $Y_h$.
This way we get a principal $\bG$-bundle $\mathcal G_t$ over $\Aff^1\times U$.
Clearly, the monic polinomial
$h$ and the principal $\bG$-bundle on $\Aff^1\times U$
are the desired ones.
\end{proof}

\section{Simply-connected case of a theorem due to Gabber}\label{GabbersThm}
An unpublished theorem due to Gabber states particularly that if the base field $k$ is finite, then the Grothendieck--Serre conjecture
is true for any reductive group scheme $\bG$ over $k$. The main aim of the present section is
to recover that result in the simply-connected case.
\begin{thm}\label{Gabbers_Theorem}
Let $k$ be a finite field and let $R$ be a regular local ring containing $k$,
and let $K$ be its field of fractions. Given a simply-connected reductive group scheme $\bG$ over $k$, the map
\[
  H^1_{\text{\'et}}(R,\bG)\to H^1_{\text{\'et}}(K,\bG),
\]
induced by the inclusion of $R$ into $K$, {\it has a trivial kernel.}
\end{thm}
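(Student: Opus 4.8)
The plan is to deduce Theorem \ref{Gabbers_Theorem} from Theorem \ref{PSV} by a standard limit argument reducing the regular local ring $R$ containing $k$ to the semi-local ring of finitely many closed points on a $k$-smooth affine variety, and then to prove Theorem \ref{PSV} from the geometric Theorem \ref{MainHomotopyIntrod}. So there are really two things to carry out.

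\medskip

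\textbf{Step 1: reduction of Theorem \ref{Gabbers_Theorem} to Theorem \ref{PSV}.} Let $\mathcal G$ be a principal $\bG$-bundle over $R$ whose generic fibre is trivial; I want to show $\mathcal G$ is trivial. Since $\bG$ is defined over the finite field $k$ and all the data are of finite type, I would first descend: write $R$ as a filtered colimit of smooth $k$-algebras, so that $\mathcal G$, together with a trivialization of its restriction to $\Spec K$, is defined over some finitely generated smooth $k$-subalgebra $A\subset R$, after which the bundle becomes trivial over the fraction field of $A$ as well (trivializations also descend). Geometrically this realizes $\Spec R$ as a pro-object in the Zariski localizations of a $k$-smooth affine variety $X=\Spec A$. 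The point $\mathfrak m\subset R$ gives a point $x\in X$; by Noether normalization / prime avoidance one may choose $A$ so that $x$ is a \emph{closed} point (or replace $X$ by an affine neighbourhood and use that a non-closed point lies in the closure of closed points, passing to the semi-local ring of finitely many closed points in the closure of $x$ whose common localization dominates $R$). Since $R$ is a further localization of $\mathcal O_{X,x}$ and triviality over $K$ is already available, it suffices to know triviality over the semi-local ring of those closed points, which is exactly the conclusion of Theorem \ref{PSV}. The only slightly delicate bookkeeping is arranging the relevant point to be closed; this is the familiar trick (as in \cite{FP}, \cite{PSV}) of shrinking $X$ and is not really an obstacle over a \emph{finite} field, where every closed point has finite residue field but there are plenty of them.

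\medskip

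\textbf{Step 2: proof of Theorem \ref{PSV} from Theorem \ref{MainHomotopyIntrod}.} Let $\mathcal O=\mathcal O_{X,\{x_1,\dots,x_n\}}$, $U=\Spec\mathcal O$, $K$ its fraction field, and let $\mathcal G$ be a principal $\bG$-bundle over $U$ with trivial generic fibre. By Theorem \ref{MainHomotopyIntrod} there are a principal $\bG$-bundle $\mathcal G_t$ over $\mathbb A^1_U=\Spec\mathcal O[t]$ and a monic $h(t)\in\mathcal O[t]$ with: $\mathcal G_t$ trivial on $(\mathbb A^1_U)_h$; $\mathcal G_t|_{\{0\}\times U}\cong\mathcal G$; and $h(1)\in\mathcal O^\times$. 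Because $h(1)$ is a unit, the section $t=1$ factors through $(\mathbb A^1_U)_h$, so the restriction $\mathcal G_t|_{\{1\}\times U}$ is trivial. Now one uses a homotopy-invariance (rigidity) statement for principal $\bG$-bundles over $\mathbb A^1$ of a semi-local regular ring: since $\bG$ comes from the finite field $k$ and $\mathcal O$ contains $k$, the two specializations $\mathcal G_t|_{\{0\}\times U}$ and $\mathcal G_t|_{\{1\}\times U}$ are isomorphic. Concretely, this is the Chow--Weil / Raghunathan--Ramanathan type fact that $H^1_{\et}(\mathbb A^1_U,\bG)\to H^1_{\et}(U,\bG)$ is injective on the subset of classes trivial at one rational point, valid for $\bG$ reductive defined over a field contained in the base, proved via descent along a Zariski cover of $\mathbb A^1_U$ and an $\mathbb A^1$-deformation argument. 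Hence $\mathcal G\cong\mathcal G_t|_{\{1\}\times U}$ is trivial.

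\medskip

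The main obstacle is Step 2's appeal to the $\mathbb A^1$-rigidity for torsors under a reductive $k$-group over the affine line of a semi-local ring: over an infinite field this is standard (it underlies \cite[Thm.~1.2]{PSV}), but the whole point of the present paper is that $k$ is \emph{finite}, where the usual moving-lemma inputs are unavailable, so one must check that the deformation argument needed here uses only the existence of the distinguished-square geometry already encoded in Theorem \ref{MajorIntrod} and not any infinitude of $k$. I expect the argument to route this rigidity precisely through the elementary distinguished square of diagram (\ref{SquareDiagram2_2}): Mayer--Vietoris for $\bG$-torsors on that square (patching the trivial torsor on $(\mathbb A^1\times U)_h$ with $p_X^*\mathcal G'$ on $Y$) is exactly what produced $\mathcal G_t$ in the proof above, and the same square controls how the class behaves along $t=0$ versus $t=1$. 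So the real content is organizational: assembling the pieces supplied by Theorem \ref{MajorIntrod} into the injectivity statement, which is where care with the finite base field is required.
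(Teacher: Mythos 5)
Your Step 1 (the limit/descent reduction of the general regular local ring to the semi-local ring of finitely many closed points on a $k$-smooth affine variety) is in the same spirit as the paper, which dispatches this reduction in one sentence. The genuine gap is in Step 2, at exactly the point you flag and then leave unresolved: the assertion that the specializations of $\mathcal G_t$ at $t=0$ and $t=1$ agree because of an "$\mathbb A^1$-rigidity" for torsors under a reductive group defined over a subfield. No such general statement is available: injectivity of $H^1_{\text{\'et}}(\mathbb A^1_U,\bG)\to H^1_{\text{\'et}}(U,\bG)$ (even on classes trivial along one section, even for $\bG$ coming from the base field) fails for anisotropic groups -- there are explicit counterexamples of nontrivial torsors over $\mathbb A^1_{\mathcal O}$ that are trivial over $(\mathbb A^1_{\mathcal O})_h$ and along sections -- so the "deformation argument" you hope to extract from the distinguished square of Theorem \ref{MajorIntrod} cannot exist at this level of generality. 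The missing idea is precisely where the finiteness of $k$ is used: after reducing to $\bG$ simple simply connected, one invokes Lang's theorem to see that a semi-simple group over a finite field is quasi-split, so $\bG$ contains a $k$-Borel and in particular is \emph{isotropic}; then the pair $(\mathcal G_t,h)$ produced by Theorem \ref{MainHomotopyIntrod} satisfies the hypotheses of the arithmetic theorem for isotropic simple simply connected group schemes \cite[Thm.~1.3]{PSV} (a torsor over $\mathcal O[t]$ trivial over the complement of a monic $h$ with $h(1)$ a unit is itself trivial), which gives triviality of $\mathcal G_t$ on all of $\mathbb A^1_U$ and hence of $\mathcal G=\mathcal G_t|_{\{0\}\times U}$.

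So the content you describe as "organizational" is in fact the substantive input: without the isotropy supplied by Lang's theorem there is nothing to assemble, and with it the conclusion is not a rigidity statement you must prove from diagram (\ref{SquareDiagram2_2}) but a quotable theorem from \cite{PSV}. To repair your Step 2, replace the appeal to Raghunathan--Ramanathan-type homotopy invariance by: (a) reduction of the simply-connected reductive case to the simple simply connected case, (b) quasi-splitness of $\bG$ over the finite field $k$, and (c) the citation of \cite[Thm.~1.3]{PSV} applied to $\mathcal G_t$ and $h$; conditions (i)--(iii) of Theorem \ref{MainHomotopyIntrod} are exactly what that theorem requires, with $h(1)\in\mathcal O^\times$ playing the role you already identified.
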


\begin{proof}
The case of a general regular local ring containing $k$ is easily reduced
to the case, when $R$ is the semi-local ring of a finitely many
closed points on an affine $k$-smooth variety $X$.
Moreover we may and will suppose that $\bG$ is
a simple simply-connected $k$-group.

The $k$-group scheme $\bG$ is defined over $k$ and $k$ is finite, and
$\bG$ is
a simple simply-connected $k$-group.
Hence, $\bG$ contains a $k$-Borel subgroup scheme.
Particularly, $\bG$ is isotropic. The bundle $\mathcal G_t$
and the monic polinomial $h$ from
Theorem \ref{MainHomotopyIntrod}
satisfy the hypotheses of Theorem
\cite[Thm.1.3]{PSV}.
Thus the principal $\bG$-bundle
$\mathcal G_t$
from
Theorem \ref{MainHomotopyIntrod}
is trivial. Hence the restriction of $\mathcal G_t$ to $\{0\}\times U$
is trivial. From the other side the latter restriction coincides
with the original $\bG$-bundle $\mathcal G$.
Hence the original $\bG$-bundle $\mathcal G$ is trivial.
\end{proof}

\end{document}